\newtheorem{theorem}{Theorem}[section]
\newtheorem{definition}[theorem]{Definition}
\newtheorem{lemma}[theorem]{Lemma}
\theoremstyle{definition}
\theoremstyle{remark}
\theoremstyle{remark}
\newcommand{\jmap}[3]{#1: #2 \rightarrow #3}
\newcommand{\real}{\mathbb{R}}
\DeclareMathOperator*{\maxi}{\text{Maximize}}
\newcommand{\expb}[1]{\exp\left({#1}\right)}
\newcommand{\logb}[1]{\log\left({#1}\right)}
\newcommand{\norm}[1]{\left\lVert {#1} \right\rVert}
\newcommand{\ic}{\mathbf{j}}
\newcommand{\br}[1]{({#1})}
\newcommand{\bigbr}[1]{\left({#1}\right)}
\newcommand{\sqb}[1]{\left[{#1}\right]}
\newcommand{\inv}[1]{{\left(#1\right)}^{-1}}
\newcommand{\vect}[1]{\mathbbold{#1}}
\newcommand{\vones}[1][]{\vect{1}_{#1}}
\DeclareSymbolFont{bbold}{U}{bbold}{m}{n}
\DeclareSymbolFontAlphabet{\mathbbold}{bbold}
\newcommand{\cx}{c}
\newcommand{\E}{\mathcal{E}}
\newcommand{\Gra}{\mathcal{G}}
\newcommand{\Vset}{v}
\newcommand{\Vmin}{v^{\min}}
\newcommand{\Vmax}{v^{\max}}
\newcommand{\V}{\mathcal{V}}
\newcommand{\is}{\mathbf{j}}
\newcommand{\Lap}{\mathsf{L}}
\newcommand{\Neb}[1]{\mathcal{N}\br{#1}}
\newcommand{\Par}[1]{\mathcal{P}\br{#1}}
\newcommand{\Chi}[1]{\mathcal{C}\br{#1}}
\newcommand{\sx}{s}
\newcommand{\Bnew}{\tilde{B}}
\newcommand{\Bnewtot}{\tilde{\mathbf{B}}}
\newcommand{\Bpar}{B}
\newcommand{\Gpar}{G}
\newcommand{\Btot}{\mathbf{B}}
\newcommand{\Gtot}{\mathbf{G}}
\newcommand{\conj}[1]{{\overline{#1}}}
\newcommand{\Vlog}{\gamma}
\newcommand{\R}{\mathbb{R}}
\definecolor{RED}{rgb}{0.6,0.,0.}
\definecolor{BLUE}{rgb}{0.,0.,0.6}
\definecolor{GREEN}{rgb}{0,0.5,0}
\definecolor{MALINA}{rgb}{0.6,0.,0.6}
\definecolor{YELLOW}{rgb}{0.8,0.8,0}
\newlength\tindent
\newcommand{\tb}{\color{black}}
\newcommand{\djb}[1]{{\color{black} {#1}}}
\begin{document}
%
\title{High-voltage solution in radial power networks: Existence, properties  and equivalent algorithms}
%
%
%

\author{Krishnamurthy~Dvijotham,~\IEEEmembership{Member,~IEEE,}
        Enrique~Mallada,~\IEEEmembership{Member,~IEEE}
        and~John.W~Simpson-Porco,~\IEEEmembership{Member,~IEEE,}
\thanks{KD is with the Optimization and Control Group, Pacific Northwest National Laboratory, Richland, WA, 99354 USA e-mail: dvij@cs.washington.edu. KD was supported by the Control of Complex Systems Initiative at PNNL.}
\thanks{JS is with the department of Electrical and Computer Engineering, University of Waterloo, Canada and EM is with the department of Electrical and Computer Engineering, Johns Hopkins University}
\thanks{Manuscript received April 19, 2017; revised June 15, 2017.}}

\maketitle \footnote{This article has been accepted for publication in the IEEE Control Systems Letters journal \url{http://ieee-cssletters.dei.unipd.it/index.php}}


\begin{abstract}
The  AC power flow equations describe the steady-state behavior of the power grid. 
While many algorithms have been developed to compute solutions to the power flow equations, \djb{few} theoretical results are available characterizing when such solutions exist, or when these algorithms can be guaranteed to converge. 
{\tb In this paper, we derive necessary and sufficient conditions for the existence and uniqueness of a power flow solution in balanced radial distribution networks with homogeneous (uniform $R/X$ ratio) transmission lines.}
\djb{We study three distinct solution methods:  fixed point iterations}, convex relaxations, and energy functions \textemdash{} we show that the three algorithms successfully find a solution if and only if a solution exists.
\djb{Moreover, all three algorithms always find the unique high-voltage solution to the power flow equations, the existence of which we formally establish.}
{\tb \djb{ At this solution, we prove that (i) voltage magnitudes are increasing functions of the reactive power injections, (ii) the solution is a continuous function of the injections, and (iii) the solution is the last one to vanish as the system is loaded past the feasibility boundary.}}

\end{abstract}

\begin{IEEEkeywords}
Power systems, Smart grid, Stability of nonlinear systems
\end{IEEEkeywords}

%
\IEEEpeerreviewmaketitle

\section{Introduction}


\IEEEPARstart{T}{he} AC power flow equations are one of the most widely used modeling tools in power systems. They characterize the steady-state relationship between power injections at each bus and the voltage magnitudes and phase angle sthat are necessary to transmit power from generators to loads. 
\djb{They are embedded in every power system operations activity, including optimal power flow, state estimation,  security/stability assessment, and controller design~\cite{JM-JWB-JRB:08}.}

Since the power flow equations are nonlinear, solutions may not exist, and {\tb even when a solution exists, there may be multiple solutions}. The insolvability of these equations may indicate a system-wide instability, such as voltage collapse \cite{TVC:00}. Nonetheless, decades of experience show that there is typically a unique ``high-voltage'' solution \cite{DKM-DM-MN:16}. The high-voltage (small-current) solution \djb{is typically assumed to be the desired operating point for the network \cite{FD-MC-FB:11v-pnas}, although there are exceptions to this when the system is operated close to voltage collapse \cite{prada1998voltage}.} \djb{In general then, establishing the existence, uniqueness, and properties of this high-voltage solution is a prerequisite for static and dynamic network analysis.}


The second implication of power flow nonlinearity is that \textemdash{} even when solutions exist \textemdash{} finding them can be challenging. While many iterative algorithms (e.g., Newton-Raphson and Gauss-Seidel variants \cite{DM-DKM-KT:16}) are effective at solving power flow equations, they lack useful theoretical guarantees. In particular, when these algorithms fail to find a solution, it could be (i) due to an initialization issue, (ii) due to numerical instability, or (iii) that no solution exists to be found. This uncertainty introduces conservatism into system operation: when a perfectly good solution exists, but the solver fails to find it, the operator would mistakenly declare the case to be unsafe.


{\tb  These limitations of conventional nonlinear equation solvers have spurred the development of new conceptual frameworks for studying power flow equations \cite{DM-DKM-KT:16}. Each framework has a body of supporting theory and offers an algorithmic approach for solving the power flow equations. We consider three frameworks:

\begin{enumerate}\itemsep=1pt
\item[(i)] fixed point approaches, which exploit contraction/monotonicity properties of an operator to iteratively find a power flow solution \cite{SB-SZ:15, SY-HDN-KST:15, CW-AB-JYLB-MP:16, JWSP:17a, JWSP:17b, KD-SL-MC:15b}; \djb{here we propose and study a novel fixed point approach.}
\item[(ii)] convex relaxations, which cast the problem of finding a power flow solution as a convex optimization problem; a convex super-set containing all power flow solutions is defined, and minimization of a carefully chosen function over this set yields the power flow solution \cite{LG-NL-UT-SL:12,SHL:14a,JL-DT-BZ:14,SHL:14b}.
\item[(iii)] energy functions, where the stable power flow solution is characterized as a local minimizer of an appropriately defined scalar function; the function can then be minimized by gradient descent to find the solution \cite{DjEnergyFun}.\footnote{\djb{Note that in this paper we use energy function in a generalized sense so that it may not actually correspond to a Lyapunov function of the dynamics. However, we still abuse terminology to refer to minimizers of the energy function as ``stable'' solutions.}}
\end{enumerate}

}

%
{\tb For a given set of power injections, we show that there are only two possibilities. Either
\begin{enumerate}
\item[1)] there are no power flow solutions, or
\item[2)] there is a unique ``high voltage'' power flow solution\footnote{High-voltage meaning that the voltage magnitude at each bus is higher than the corresponding voltage magnitude of any other solution.}, and all three approaches described above find this solution.
\end{enumerate}}

{\tb The properties we establish (existence of a unique high-voltage/voltage-regular/stable solution) have been conjectured to be true for distribution networks. \djb{To the best of our knowledge though, formal proofs of these properties have never been presented. Rigorous proofs are difficult and require special assumptions on the distribution system.} In the next subsection, we summarize these assumptions and precisely define the contributions of this paper.

\subsection{Assumptions made and contributions of the paper}
\label{Sec:Contributions}
We make several assumptions on the distribution network:\\
 (i) \emph{Simplified line model:}  the shunt elements typically present in the $\Pi$-model are neglected, along with voltage control mechanisms like capacitor banks and tap-changing transformers; all distribution lines are modeled as series impedances. \\
(ii) \emph{Balanced operation:} the three phases of the system are balanced, allowing for a single-phase representation.\\
(iii) \djb{\emph{Constant $r/x$ ratios of lines’ longitudinal impedance parameters:}} all distribution lines have an equal ratio of
resistance to reactance.\\
(iv) \emph{Strictly positive voltage magnitudes:} we consider power flow solutions with strictly positive voltage magnitudes at every bus.\\
While \djb{these assumptions limit the applicability of our results to practical distribution systems, we leverage them to derive equally strong results:}\\
    (i) \emph{Existence conditions:} We establish necessary and sufficient conditions for the existence of solutions. This is \djb{in contrast to previous works} that only establish sufficient \cite{SY-HDN-KST:15}\cite{JWSP:17a}\cite{JWSP:17b} or necessary \cite{molzahn2012sufficient} conditions.\\
    (ii) \emph{Connections between approaches:} We \djb{establish} connections between the three approaches (fixed point, relaxation, energy function) and show that all three approaches either find the \emph{same high-voltage solution} (if one exists) or fail (if no power flow solution exists). These guarantees are stronger than the guarantees established in previous work  on computing power flow solutions \cite{DjEnergyFun,jbar2006,SB-SZ:15,SY-HDN-KST:15}.\\
    (iii) \emph{Properties of the power flow solution:} We establish several properties of the power flow solution (voltage regularity, stability and continuity). While these properties have been conjectured in previous work ~\cite{HDC-MEB:90, KNM-HDC:00}, we provide rigorous proofs of these from first principles.
We envision that the strong results established under the special assumptions in this paper will form the basis of further studies that will extend the results to be applicable to practical distribution networks.}

The rest of the paper is organized as follows. Section \ref{Sec:ACPF} \djb{introduces the power flow equations}. Section \ref{Sec:Algorithms} describes three \djb{solution algorithms} based on  fixed-point iterations, convex relaxations, and energy functions.
Our main contributions are developed in Section \ref{Sec:Main} where we show the equivalence of the three representations and describe additional properties of the high-voltage solution. Section \ref{Sec:Conclusions} \djb{concludes the paper.}

\section{AC power flow equations}\label{Sec:ACPF}
{\tb We start with notation that is used in this paper: \\
$\mathbb{C}$: Set of complex numbers, 
$\is$: $\sqrt{-1}$ \\
$\mathrm{arg}(x)$: Phase of the complex number x \\
$|x|$: Magnitude of the complex number x \\
$\log(x)$: The vector with entries $\log(x_i)$ for any $x \in \R^n$ \\
$\exp(x)$: The vector with entries $\exp(x_i)$ for any $x \in \R^n$ \\
$[a,b]$: For $a,b\in\R^n$ with $a\leq b$ (componentwise), this denotes $\{x\in \R^n: a \leq x \leq b\}$ \\
Z-matrix: Square matrix with non-positive off-diagonal entries
$e_i$: Vector with all entries $0$ except the $i$-th entry (equal to $1$) \\
$[n]$: $\{1,2,\ldots,n\}$ for any natural number $n$ \\
\djb{$\vones[n]$: Vector in $\R^n$ with all entries equal to $1$}
}

We will focus exclusively on radial (tree) \djb{AC} power networks in  steady-state. The grid topology is that of a rooted \djb{oriented} tree $\Gra=\br{\V,\E}$ where $\V = \{0,\ldots,n\}$ is the set of buses and (by convention) the lines $\br{i,k}\in\E$ are oriented away from the substation bus $0$. Each line $\br{i,k}\in\E$ in the network has an associated complex admittance $Y_{ik} = Y_{ki} =\Gpar_{ik}-\is \Bpar_{ik}$, where $G_{ik} \geq 0$ and $B_{ik} > 0$.
{\tb We let $\Neb{i} = \{k: (i,k)\in \E \text{ or } (k,i) \in \E\}$ denote the set of neighbors of bus $i$.}

At every bus $i \in \V$, we denote the voltage phasor by $V_i \in \mathbb{C}$, the squared voltage magnitude by $\Vset_i = |V_i|^2$, the voltage phase by {$\theta_i = \mathrm{arg}(V_i)$}, (net) active power injection by  $p_i$, and (net) reactive power injection by $q_i$. Bus $0$ is interpreted as a substation, and taken to be the reference (slack) bus with voltage phasor $1\expb{\is 0}$ per unit. Every other bus $i \in \{1,\ldots,n\}$ is a PQ bus with $p_i, q_i$ specified and $\Vset_i, \theta_i$ to be determined.
%
%
Under assumptions (i) and (ii) (Section \ref{Sec:Contributions}), the power balance equation at bus $i\in\V$ is
\begin{equation}\label{eq:PFa}
\begin{aligned}
p_i + \ic q_i &= V_i\sum_{k\in\Neb{i}}\conj{Y_{ik}}\br{\conj{V_i}-\conj{V_k}}\,,
\end{aligned}
\end{equation}
where $\conj{x}$ denotes the complex conjugate of $x$.
The voltage phase difference across line $\br{i,k}\in \E$ is denoted by $\phi_{ik}=\theta_i-\theta_k$.
Substituting $V_i={\sqrt{\Vset_i}}\expb{\ic\theta_i}$ and taking real and imaginary parts of \eqref{eq:PFa}, a simple calculation shows that
\begin{subequations}\label{eq:PFrealreac}
\begin{align}
p_i &= \Gtot_{i}\Vset_i + \sum_{k \in \Neb{i}}\nolimits \br{\Bpar_{ik}s_{ik} - \Gpar_{ik}c_{ik}}
\label{eq:PFreal} \\
q_i &= \Btot_{i}\Vset_i + \sum_{k \in \Neb{i}}\nolimits \br{-\Gpar_{ik}s_{ik}-\Bpar_{ik}c_{ik}}
\label{eq:PFreac}
\end{align}
\end{subequations}
for each $i \in \V\setminus\{0\}$, where
\begin{equation*}
\Gtot_i:= \sum_{k\in\Neb{i}} \nolimits \Gpar_{ik} \qquad \Btot_i:= \sum_{k\in\Neb{i}}\nolimits \Bpar_{ik}
\end{equation*}
denote the total conductance/susceptance incident to bus $i$, and we have used the simplifying variables
\begin{align}\label{eq:newVars}
\sx_{ik}:=\sqrt{\Vset_i\Vset_{k}}\sin\br{\phi_{ik}},\quad
\cx_{ik} :=\sqrt{\Vset_i\Vset_{k}}\cos\br{\phi_{ik}}\,.
\end{align}
\djb{The power flow equations can be written using only the variables}  $\cx_{ik},\sx_{ik}$ and $\Vset_i$, with the additional constraints that
\begin{equation}\label{Eq:Constraints}
\cx_{ik}^2+\sx_{ik}^2=\Vset_i\Vset_{k}\,, \qquad (i,k)\in\E\,.
\end{equation}
The phase differences $\phi=\{\phi_{ik},\br{i,k}\in\E\}$ can be recovered uniquely (modulo $2\pi$) via $\sin\br{\phi_{ik}}=\frac{\sx_{ik}}{\sqrt{\Vset_i\Vset_{k}}}$.\footnote{\tb As mentioned before, we assume $v_i>0$ for all $i\in \V$.
Having $v_i=0$ for some bus $i$ necessarily implies that no power is being transferred through the lines adjacent to it which implies that no load can exist at that bus. }
Since the network is a tree, $|\E| = n$, and once $\theta_0=0$ is fixed there is a one-to-one mapping between $\phi$ and $\theta=\{\theta_{i}\}_{i=1}^{n}$, which allows the phase angles $\theta$ to also be uniquely recovered. {\tb Since all lines are homogeneous (assumption (iv) from Section \ref{Sec:Contributions})},  $\Gpar_{ik}/\Bpar_{ik} = \kappa$ some $\kappa > 0$ and for all $(i,k)\in \E$. 
We create a new system of equations by subtracting $\kappa$ times \eqref{eq:PFreac} from \eqref{eq:PFreal}, and adding $\kappa$ times \eqref{eq:PFreal} to \eqref{eq:PFreac}, to obtain
\begin{subequations}\label{eq:PFnew}
\begin{align}
\tilde{p}_i &:= p_i - \kappa q_i = \sum_{k \in \Neb{i}}\nolimits \Bnew_{ik} \sx_{ik}\,, \label{eq:PFrealeasy} \\
\tilde{q}_i &:= q_i + \kappa p_i = \Bnewtot_i\Vset_i - \sum_{k \in \Neb{i}}\nolimits \Bnew_{ik}\cx_{ik}\,,
\label{eq:PFreaceasy}
\end{align}
\end{subequations}
\djb{where $\Bnew_{ik} := (1+\kappa)\Bpar_{ik}$ and $\Bnewtot_{i} := \Btot_{i}(1+\kappa)$}. 
For notational simplicity, going forward we will drop the $\tilde{\cdot}$ and use $\Bpar_i,\Btot_i,p_i,q_i$ instead of $\Bnew_i,\Bnewtot_i,\tilde{p}_i,\tilde{q}_i$. The equation \eqref{eq:PFrealeasy} is a \djb{square, full-rank} linear system in the variables $s = \{s_{ik}\}$. Thus  \eqref{eq:PFrealeasy} can be uniquely solved for the flows $\sx$ as a function of $p$, and we denote this solution by $\sx=\sx\br{p}$. The power flow equations \eqref{eq:PFnew} with the constraints \eqref{Eq:Constraints} then simplify to
\begin{subequations}\label{eq:PFfinal}
\begin{align}
& \Btot_i\Vset_i-\sum_{k \in \Neb{i}}\nolimits \Bpar_{ik}\cx_{ik} = q_i\,,\quad &\forall &i \in [n]  \\
& \sx_{ik}\br{p}^2+\cx_{ik}^2= \Vset_i\Vset_{k}\,,\quad  &\forall &(i,k)\in \E\,.  
\end{align}
\end{subequations}
 We will sometimes drop the dependence $\sx_{ik}\br{p}$ for brevity and simply write $\sx_{ik}$.
\djb{Let $\Lap = \Lap^{\sf T} \in \mathbb{R}^{(n+1)\times(n+1)}$ be the \emph{Laplacian matrix} of the (undirected) graph, with entries $\Lap_{ik} = -B_{ik}$ and $\Lap_{ii} = \Btot_{i}$.
We may write $\Lap$ as the block matrix
$$
\Lap = \begin{pmatrix}
\Lap_{00} & \Lap_{0\mathrm{r}}\\
\Lap_{\mathrm{r}0} & \Lap_{\rm red}
\end{pmatrix}
$$
where the first row/column corresponds to the substation and $\Lap_{\rm red} \in \mathbb{R}^{n \times n}$ is the \emph{reduced Laplacian matrix}. We let $\Btot = \mathrm{diag}(\Btot_i)_{i=0}^n$ be the diagonal matrix with entries $\Btot_i$, and $\Btot_{\rm red} = \mathrm{diag}(\Btot_i)_{i=1}^n$ be the corresponding reduced matrix.} 

\section{Solving the power flow equations}
\label{Sec:Algorithms}
{\tb We \djb{now} describe three approaches for computing solutions to the power flow equations \eqref{eq:PFfinal}. } 
\subsection{Convex relaxation approach}
We start with the power flow equations \eqref{eq:PFfinal} and relax both equations to inequalities:
\begin{equation}\label{Eq:Relaxation1}
\Btot_i \Vset_i-\sum_{k \in  \Neb{i}} B_{ik}c_{ik}\leq q_i \,,\quad
\br{s_{ik}(p)}^2+c_{ik}^2 \leq v_iv_k\,,
\end{equation}
for all $i \in \sqb{n}$, and  $\br{i,k}\in\E$, respectively. {\tb The inequalities \eqref{Eq:Relaxation1} are equivalent to the constraints of the  second-order conic relaxation described in \cite{jbar2006}. While \cite{jbar2006} was the first to formulate this conic relaxation, no guarantees were established on when this approach provably finds a power flow solution. In the context of the optimal power flow problem, \cite{SHL:14b} describes several results proving tightness of the convex relaxation, that is, that the global optimum of the non-convex OPF problem can be computed by solving a convex relaxation. Here, we are concerned with simply \emph{solving the power flow equations}, and indeed, the results of \cite{SHL:14b} imply that the power flow equations for radial networks can be solved by solving a convex optimization problem. In this paper, we use the convex relaxation formulation as a tool to establish several properties of the power flow solution for radial networks.}

Lemma \ref{lem:Lemb} shows that the relaxation \eqref{Eq:Relaxation1} is feasible if and only if there exists a voltage vector $\Vset$ such that
\begin{align}
\Btot_i \Vset_i-\sum_{k \in  \Neb{i}} B_{ik}\sqrt{\Vset_i\Vset_k-s_{ik}^2}\leq q_i\,, \quad \forall i \in \sqb{n}\,.
\label{eq:PFred}
\end{align}
We will also refer to the equality form of the above constraint
\begin{align}
\Btot_i \Vset_i-\sum_{k \in  \Neb{i}} B_{ik}\sqrt{\Vset_i\Vset_k-s_{ik}^2}= q_i\,, \quad \forall i \in \sqb{n}\,.
\label{eq:PFredeq}
\end{align}

{\tb We propose computing a power flow solution by maximizing a weighted linear combination of log-voltage magnitudes:
%
\begin{equation}\label{eq:PFRelaxOpt}
\begin{aligned}
& \underset{v \in \mathbb{R}^n_{>0}}{\text{maximize}}
& & \sum_{i=1}^n \nolimits w_i \log\br{\Vset_i} \quad
\text{ subject to } \eqref{eq:PFred}
\end{aligned}
\end{equation}
were $w_i>0$ are arbitrary positive weights.}
%
%
\begin{definition}\label{def:RelaxSuccess}
The relaxation approach \emph{succeeds} if \eqref{eq:PFRelaxOpt} is feasible and the optimal solution satisfies \eqref{eq:PFredeq}. In this case, a solution to \eqref{eq:PFfinal} can be computed by taking the optimal solution $\Vset^\star$ and defining $\sx^\star_{ik}:=\sx_{ik}(p)$, $\cx^\star_{ik}:=\sqrt{\Vset_i\Vset_k-\br{\sx^\star_{ik}}^2}$.
Otherwise, we say the approach \emph{fails}.
\end{definition}

\subsection{Energy function approach}

Power flow solutions can also be interpreted as stationary points of the \emph{energy function} introduced in \cite{varaiya1985direct, DjEnergyFun}. \djb{In the context of transient stability in lossless transmission networks, it is known that the energy function is a Lyapunov function of the power system swing dynamics, and that any power flow solution at which the energy function is locally convex is asymptotically stable. We do not pursue such a stability analysis here, and instead use the energy function in a broader sense \textemdash{} it is simply a function whose stationary points correspond to power flow solutions. We will nonetheless refer to power flow solutions at which the energy function is locally convex (i.e., its Hessian is positive definite) as ``stable'' solutions. Our main results will establish that such a solution indeed exists, and is unique.
Our approach here builds on \cite{DjEnergyFun}.}
%
The energy function {\tb $\jmap{E}{\real^{n}_{>0} \times \real^n}{\real}$} is defined as
\begin{align}
E\br{\Vset,\theta}&:=\sum_{\br{i,k}\in \E}B_{ik}\br{\Vset_i+\Vset_k-2\sqrt{\Vset_i\Vset_k}\cos\br{\theta_i-\theta_k}}\nonumber\\&\quad +\sum_{i=1}^n\nolimits p_i \theta_i + \sum_{i=1}^{n}\nolimits q_i \log\br{\Vset_i}/2\,.  \label{eq:EFdef}
\end{align}
One may check directly that
\begin{align}
\frac{\partial E}{\partial \log\br{\Vset}}=0 \equiv \eqref{eq:PFreaceasy}\,, \qquad
\frac{\partial E}{\partial \theta}=0 \equiv \eqref{eq:PFrealeasy}\,, \label{eq:Estat}
\end{align}
which shows that power flow solutions are simply stationary points of the energy function. 
\begin{theorem}[\cite{DjEnergyFun}]
The energy function $E$ is jointly convex in $\br{\log\br{\Vset},\theta}$ provided that
\begin{subequations}\label{eq:EnergyFunCondvcs1}
\begin{align}
\sum_{i=1}^n \Btot_i \sqb{2\Vset_i}_{i} -\sum_{\br{i,k}\in \E} \frac{\Bpar_{ik}\sqrt{\Vset_i\Vset_{k}}}{\cx_{ik}} \sqb{\begin{pmatrix} 1 & 1 \\
1 & 1
\end{pmatrix}}_{i,k}\succeq 0 \label{eq:EnergyFunCondvcsa}\\
\cx_{ik}=\sqrt{\Vset_i\Vset_k}\cos\br{\theta_i-\theta_k}>0 
\end{align}
\end{subequations}
where \[\sqb{a}_i=a\mathrm{e}_i\mathrm{e}_i^{\sf T},\sqb{\begin{pmatrix} a & b \\
b & c
\end{pmatrix}}_{i,k} = a\mathrm{e}_i\mathrm{e}_i^{\sf T} + b(\mathrm{e}_i\mathrm{e}_k^{\sf T}+\mathrm{e}_{k}\mathrm{e}_i^{\sf T}) + c\mathrm{e}_k\mathrm{e}_k^{\sf T}\]
The constraints \eqref{eq:EnergyFunCondvcs1} define a convex set in $\br{\logb{\Vset},\theta}$.\footnote{\djb{The condition $c_{ik}>0$ is equivalent to requiring that the phase differences between neighboring buses are smaller than $\frac{\pi}{2}$. In Theorem \ref{thm:MainExist}, it is shown that this is always the case for the high-voltage power flow solution.}}
\end{theorem}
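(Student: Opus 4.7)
The plan is to compute the Hessian of $E$ directly in the coordinates $(u,\theta)$ with $u_i = \log \Vset_i$, then reduce the positive-semidefiniteness question to an $n \times n$ matrix inequality via a Schur complement on the $\theta\theta$-block. Since the linear terms drop out, only the edge sum $\sum_{(i,k)\in\E}\Bpar_{ik}(e^{u_i}+e^{u_k}-2e^{(u_i+u_k)/2}\cos(\theta_i-\theta_k))$ contributes to the Hessian. Using $\partial \Vset_i/\partial u_i = \Vset_i$, $\partial \cx_{ik}/\partial u_i = \cx_{ik}/2$, $\partial \cx_{ik}/\partial \theta_i = -\sx_{ik}$ and the analogues for $\sx_{ik}$, I would aggregate the per-edge contributions into the block form with $H_{\theta\theta}=\sum_{(i,k)}2\Bpar_{ik}\cx_{ik}(\mathrm{e}_i-\mathrm{e}_k)(\mathrm{e}_i-\mathrm{e}_k)^{\sf T}$, $H_{u\theta}=\sum_{(i,k)}\Bpar_{ik}\sx_{ik}(\mathrm{e}_i+\mathrm{e}_k)(\mathrm{e}_i-\mathrm{e}_k)^{\sf T}$, and $H_{uu}=\mathrm{diag}(\Btot_i\Vset_i)-\tfrac{1}{2}\sum_{(i,k)}\Bpar_{ik}\cx_{ik}(\mathrm{e}_i+\mathrm{e}_k)(\mathrm{e}_i+\mathrm{e}_k)^{\sf T}$.

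The hinge of the argument is that $H_{\theta\theta}$ is a graph Laplacian (reduced by the slack bus) with edge weights $2\Bpar_{ik}\cx_{ik}$. The hypothesis $\cx_{ik} > 0$ makes these weights strictly positive, and since the tree is connected the reduced Laplacian is positive definite. This is precisely where the second condition of \eqref{eq:EnergyFunCondvcs1} enters, and it justifies the Schur complement: $H \succeq 0$ iff $H_{uu} - H_{u\theta}H_{\theta\theta}^{-1}H_{\theta u} \succeq 0$.

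The tree assumption is then used crucially: the reduced oriented incidence matrix $C$ is a square invertible $n\times n$ matrix, so $H_{\theta\theta}=C^{\sf T}\mathrm{diag}(2\Bpar_{ik}\cx_{ik})C$ can be inverted in closed form. Writing $H_{u\theta}=C_+^{\sf T}\mathrm{diag}(\Bpar_{ik}\sx_{ik})C$, with $C_+$ the ``unsigned'' counterpart of $C$ (so that the edge rows are $\mathrm{e}_i+\mathrm{e}_k$ instead of $\mathrm{e}_i-\mathrm{e}_k$), the product $H_{u\theta}H_{\theta\theta}^{-1}H_{\theta u}$ telescopes algebraically to $\sum_{(i,k)}\tfrac{\Bpar_{ik}\sx_{ik}^2}{2\cx_{ik}}(\mathrm{e}_i+\mathrm{e}_k)(\mathrm{e}_i+\mathrm{e}_k)^{\sf T}$. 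Combining this with $H_{uu}$ and applying the trigonometric identity $\cx_{ik}^2+\sx_{ik}^2=\Vset_i\Vset_k$ to merge the two edge sums, the Schur complement collapses to the matrix appearing in \eqref{eq:EnergyFunCondvcsa} (up to a routine constant). This establishes the first part of the theorem.

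The remaining claim, that \eqref{eq:EnergyFunCondvcs1} defines a convex set in $(\log\Vset,\theta)$, is where I expect the main technical obstacle. The inequality $\cx_{ik}>0$ rewrites as $|\theta_i-\theta_k|<\pi/2$, which is convex. However, the matrix inequality \eqref{eq:EnergyFunCondvcsa} has entries that are transcendental in $(u,\theta)$ --- exponentials in $u_i$ through $\Vset_i$ and secants of $\theta_i-\theta_k$ through $\sqrt{\Vset_i\Vset_k}/\cx_{ik}$ --- so it is not an affine LMI in the natural coordinates. My plan would be to recast \eqref{eq:EnergyFunCondvcsa} as the Schur complement of a larger matrix that is jointly affine in suitable auxiliary variables (for example, variables related to $\log(1/\cx_{ik})$ or the edge ratios $\Vset_i\Vset_k/\cx_{ik}$), thereby exhibiting the feasible set as a projection of an LMI set. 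Finding the right reparameterization is the delicate part of the proof.
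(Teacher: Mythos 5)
The paper does not actually prove this theorem --- it is imported wholesale from \cite{DjEnergyFun} --- so your Hessian/Schur-complement derivation is a self-contained route rather than a reconstruction of anything in the text. The closest the paper comes is Lemma \ref{lem:EF}, which identifies the matrix in \eqref{eq:EnergyFunCondvcsa} with $2\partial\tilde{g}/\partial\gamma$ for a different purpose. Your computation of the Hessian blocks in $(u,\theta)$ with $u=\log\Vset$ is correct, the factorization $H_{\theta\theta}=C^{\sf T}\mathrm{diag}(2\Bpar_{ik}\cx_{ik})C$ with $C$ the invertible reduced incidence matrix of the tree is the right way to exploit radiality, and the telescoping $C(C^{\sf T}D_cC)^{-1}C^{\sf T}=D_c^{-1}$ is valid. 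One point you should not wave away: carrying your algebra through, the merged edge coefficient is $\Bpar_{ik}(\cx_{ik}^2+\sx_{ik}^2)/(2\cx_{ik})=\Bpar_{ik}\Vset_i\Vset_k/(2\cx_{ik})=\Bpar_{ik}\sqrt{\Vset_i\Vset_k}/(2\cos(\theta_i-\theta_k))$, whereas the printed coefficient $\Bpar_{ik}\sqrt{\Vset_i\Vset_k}/\cx_{ik}$ equals $\Bpar_{ik}/\cos(\theta_i-\theta_k)$. These differ by the non-constant factor $\sqrt{\Vset_i\Vset_k}$, so ``up to a routine constant'' is not accurate. A two-bus sanity check (slack plus one PQ bus) gives the exact convexity boundary $2\sqrt{\Vset_1}\cos\theta_1\geq 1$, which agrees with your version and not with the printed one; the statement (and the intermediate identity inside Lemma \ref{lem:EF}) appears to carry a typo inherited from conflating $e^{\gamma_i}=\Vset_i$ with $e^{\gamma_i}=|V_i|$. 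You should reconcile the normalization explicitly rather than absorb it into a ``constant.''

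The genuine gap is the final claim, that \eqref{eq:EnergyFunCondvcs1} is a convex set in $(\log\Vset,\theta)$. Your plan --- lift to an LMI in auxiliary variables such as $\log(1/\cx_{ik})$ --- is speculative, and you concede you do not have the reparameterization; as written this part proves nothing. There is a direct argument that avoids any lifting. Conjugate the matrix in \eqref{eq:EnergyFunCondvcsa} by $\mathrm{diag}(\Vset_i^{-1/2})$ (a congruence, so positive semidefiniteness is preserved). The resulting matrix $M(u,\theta)$ has diagonal entries $2\Btot_i-\sum_{k}\Bpar_{ik}\,e^{(u_k-u_i)/2}\sec(\theta_i-\theta_k)$ and off-diagonal entries $-\Bpar_{ik}\sec(\theta_i-\theta_k)$; every subtracted term has the form $e^{\text{affine}(u)}\sec(\theta_i-\theta_k)$, which is log-convex (its logarithm is affine minus $\log\cos$) and hence convex on the region $|\theta_i-\theta_k|<\pi/2$. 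Therefore $M$ is a symmetric Z-matrix-valued map that is \emph{entrywise concave} on that region. If $x_1,x_2$ are feasible and $x_\lambda$ is a convex combination, then $M(x_\lambda)\geq\lambda M(x_1)+(1-\lambda)M(x_2)$ entrywise, and the right-hand side is a positive semidefinite symmetric Z-matrix; by the standard M-matrix comparison theorem (a symmetric Z-matrix that dominates, entrywise, a positive semidefinite symmetric Z-matrix is itself positive semidefinite --- write both as $sI-P$ with $P\geq 0$ and compare spectral radii), $M(x_\lambda)\succeq 0$. Together with the convexity of $\{|\theta_i-\theta_k|<\pi/2\}$ this closes the argument. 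I recommend replacing your lifting plan with this.
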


The power flow equations \eqref{eq:PFnew} can therefore be solved by solving the following optimization problem:
\begin{equation}\label{eq:Efunconvex}
\underset{v,\theta}{\text{minimize}} \,\, E(v,\theta) \qquad \text{subject to}
\,\,\eqref{eq:EnergyFunCondvcs1}\,.
\end{equation}
{\tb Convexity allows us to draw strong conclusions:} if there is a power flow solution in the interior of the domain of the set \eqref{eq:EnergyFunCondvcs1}, it is the unique power flow solution in the set \eqref{eq:EnergyFunCondvcs1}. Conversely, if the optimizer of \eqref{eq:Efunconvex} is not a solution, there are no solutions to the PF equations in the interior of the set \eqref{eq:EnergyFunCondvcs1}.

\begin{definition}\label{def:EFSuccess}
The energy function approach \emph{succeeds} if there exists an optimizer of \eqref{eq:Efunconvex} that satisfies \eqref{eq:Estat}. In this case, a solution to \eqref{eq:PFfinal} can be computed by taking the optimizer $\br{\Vset^\star,\theta^\star}$ and defining
$\sx^\star_{ik}=\sqrt{\Vset^\star_i\Vset^\star_k}\sin\br{\theta^\star_i-\theta^\star_k}$,
$\cx^\star_{ik}=\sqrt{\Vset^\star_i\Vset^\star_k}\cos\br{\theta^\star_i-\theta^\star_k}$.
Otherwise, we say the approach \emph{fails}.
\end{definition}

\subsection{Fixed-point approach}
{\tb Simply by rearranging, the power flow equations \eqref{eq:PFredeq} can be rewritten as fixed-point equations:}
\begin{align*}
\Vset_i= g_i(v) := \frac{q_i}{\Btot_i}+\sum_{k \in \Neb{i}}\frac{B_{ik}}{\Btot_i}\sqrt{\Vset_i\Vset_k-s_{ik}^2}\,, \quad \forall i \in \sqb{n}\,.
\end{align*}
This system can be written in vector form as $\Vset=g\br{\Vset}$.
{\tb We can therefore define a fixed-point iteration
\begin{align}
\Vset^{(i+1)} \gets g\br{\Vset^{(i)}}\,, \quad \djb{\Vset^{(0)} = \Vset^{\max} := \vones[n] + 2\inv{\Lap_{\rm red}}q\,.}
\label{eq:PFfp}
\end{align}
Lemma \ref{lem:PFupper} shows that $\Vset^{\max}$ is an upper bound on the voltage magnitudes of any power flow solution.}

\begin{definition}\label{def:FPSuccess}
The fixed-point approach \emph{succeeds} if the iteration  \eqref{eq:PFfp} converges \djb{to a point $v^\star \in \mathbb{R}_{>0}^n$} satisfying $g\br{\Vset}=\Vset$. In this case, a solution to \eqref{eq:PFfinal} can be computed by taking the fixed point  $\Vset^\star$ and defining $\sx^\star_{ik}=\sx_{ik}(p)$, $\cx^\star_{ik}=({\Vset_i\Vset_k-\br{\sx^\star_{ik}}^2})^{\frac{1}{2}}$.
Otherwise, we say the approach \emph{fails}.
\end{definition} 
\section{Theoretical results on PF approaches and properties of the PF solution}
\label{Sec:Main}

For each approach presented in Section \ref{Sec:Algorithms}, it is of interest to understand when the approach succeeds \djb{and when it fails}. The following theorem addresses these questions.
\begin{theorem}[Equivalent Power Flow Approaches]\label{thm:MainExist}
The following two statements are equivalent:
\begin{enumerate}
\item[(i)] the power flow equations \eqref{eq:PFfinal} have a solution;
\item[(ii)] the approaches \eqref{eq:PFRelaxOpt}, \eqref{eq:Efunconvex}, and \eqref{eq:PFfp} succeed.
\end{enumerate}
If either of these equivalent statements are true, all three approaches compute the same power flow solution.
\end{theorem}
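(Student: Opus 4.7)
The direction (ii) $\Rightarrow$ (i) is immediate from Definitions \ref{def:RelaxSuccess}, \ref{def:EFSuccess}, and \ref{def:FPSuccess}, each of which explicitly yields a power flow solution when the corresponding algorithm succeeds. The substantive content is (i) $\Rightarrow$ (ii) together with the claim that all three algorithms produce the \emph{same} solution. My plan is to isolate a distinguished ``high-voltage'' solution $v^{\mathrm{h}}$ via the fixed-point iteration, and then argue that the other two algorithms must also return this $v^{\mathrm{h}}$.

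I would begin with the fixed-point map $g$, exploiting two structural facts. First, $g$ is componentwise monotone on its natural domain: each $g_i(v)$ is nondecreasing in every coordinate $v_j$, because the only $v$-dependence is through terms of the form $\sqrt{v_i v_k - s_{ik}(p)^2}$, which are increasing in each variable that appears. Second, if $v^\star$ is any power flow solution, Lemma \ref{lem:PFupper} gives $v^\star \leq v^{\max}$, and a short computation (using $v^{\max} = \vones[n] + 2\inv{\Lap_{\mathrm{red}}}q$ together with the AM--GM bound $\sqrt{v_i^{\max} v_k^{\max} - s_{ik}^2} \leq (v_i^{\max} + v_k^{\max})/2$) shows $g(v^{\max}) \leq v^{\max}$. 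Monotonicity of $g$ then propagates these inequalities along the iteration: $v^\star = g(v^\star) \leq v^{(i+1)} \leq v^{(i)} \leq v^{\max}$, so the sequence converges monotonically to a limit $v^{\mathrm{h}}$ with $g(v^{\mathrm{h}}) = v^{\mathrm{h}}$ and $v^{\mathrm{h}} \geq v^\star$ for every solution $v^\star$. This simultaneously proves the fixed-point algorithm succeeds and identifies its output as the unique high-voltage solution.

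For the relaxation, feasibility of \eqref{eq:PFRelaxOpt} is immediate since every power flow solution satisfies \eqref{eq:PFred}. The crux is to show that at the optimizer $v^{\mathrm{rel}}$ all $n$ constraints are active. I would argue by contradiction: if some constraint is slack, then because $\Lap_{\mathrm{red}}$ is a nonsingular M-matrix and the network is a tree, one can construct a nonnegative perturbation $\delta v$ with strictly positive component at the slack index that remains feasible, strictly increasing the objective $\sum_i w_i \log v_i$ and contradicting optimality. Thus $v^{\mathrm{rel}}$ solves \eqref{eq:PFredeq}; moreover, as the componentwise-maximal element of the feasible set, the monotone argument of the previous paragraph forces $v^{\mathrm{rel}} = v^{\mathrm{h}}$. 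For the energy function, it remains to verify that $v^{\mathrm{h}}$ lies in the interior of the convexity domain \eqref{eq:EnergyFunCondvcs1}: the high-voltage property implies small phase differences (so $c_{ik} > 0$), and the semidefiniteness condition \eqref{eq:EnergyFunCondvcsa} can be checked by exploiting the same tree-Laplacian structure. Convexity of $E$ on that region together with the stationarity identity \eqref{eq:Estat} then forces $v^{\mathrm{h}}$ to be the unique minimizer.

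The main obstacle I anticipate is the two ``tightness''-style steps: ruling out a slack constraint at the optimum of the relaxation, and verifying the positive-semidefiniteness condition \eqref{eq:EnergyFunCondvcsa} at $v^{\mathrm{h}}$. Both rest on the fact that $\Lap_{\mathrm{red}}$ is a nonsingular M-matrix whose inverse is entrywise nonnegative; this is what lets the monotone comparisons and the local feasibility-preserving perturbations be carried out cleanly on the tree.
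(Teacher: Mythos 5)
Your fixed-point branch matches the paper's argument: monotonicity of $g$, the bounds $v^\star \le v^{\max}$ and $g(v^{\max})\le v^{\max}$ from Lemma \ref{lem:PFupper}, and monotone convergence of the iteration from $v^{\max}$ down to the maximal fixed point (the paper packages this as an invocation of Theorem \ref{thm:Tarski}). For the relaxation you take a detour the paper avoids: rather than a perturbation argument for tightness, the paper observes that every feasible point $v'$ of \eqref{eq:PFRelaxOpt} satisfies $g(v')\ge v'$ and is therefore dominated by the maximal fixed point $v^\star$, which is itself feasible; since the objective is strictly increasing, $v^\star$ is the unique optimizer, and being a fixed point it satisfies \eqref{eq:PFredeq} automatically. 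Your slack-constraint perturbation can be made to work (continuity at the slack index plus the fact that raising $v_i$ only loosens the neighbors' constraints), but you would still need the maximality argument to identify the optimizer with $v^{\mathrm{h}}$, so it buys nothing.

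The genuine gap is in the energy-function branch. Showing that $(v^{\mathrm{h}},\theta^{\mathrm{h}})$ satisfies the semidefiniteness condition \eqref{eq:EnergyFunCondvcsa} is the hardest step of the theorem, and ``can be checked by exploiting the same tree-Laplacian structure'' is not an argument: the matrix in \eqref{eq:EnergyFunCondvcsa} depends on the solution values $v_i$ and $c_{ik}$, and the condition genuinely fails at low-voltage solutions on the very same tree, so no purely structural fact about $\Lap_{\rm red}$ can deliver it. The paper's mechanism is specific to the high-voltage solution and runs through the relaxation: with $\gamma=\log v$ the feasible set becomes $\tilde g(\gamma)\le q$, Lemma \ref{lem:EF} identifies $2\,\partial\tilde g/\partial\gamma$ with the matrix in \eqref{eq:EnergyFunCondvcsa}, this Jacobian is a symmetric Z-matrix, and the KKT conditions of the relaxation at its optimizer yield a multiplier $\lambda\ge 0$ with $\bigl(\partial\tilde g/\partial\gamma\bigr)\lambda = w > 0$, which by Lemma \ref{thm:Z} forces $\partial\tilde g/\partial\gamma$ to be an M-matrix, hence (by symmetry) positive definite at $v^{\mathrm{h}}$. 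Without this certificate, or an equivalent one, your proof establishes success of only two of the three approaches. A smaller related gap: ``the high-voltage property implies small phase differences'' is asserted rather than proved; what actually gives $c_{ik}>0$ is the construction $c_{ik}=\sqrt{v_iv_k-s_{ik}^2}$ together with strict positivity of the radicand at the fixed point.
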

\begin{proof}
We begin by proving (i) $\Longrightarrow$ (ii). \djb{Suppose that a power flow solution exists. The reduced power flow equations  \eqref{eq:PFredeq} have a solution, and hence $\exists \Vset^a$ such that $g\br{\Vset^a}=\Vset^a$ and hence $\exists \Vset^a$ such that $g\br{\Vset^a}\geq \Vset^a$.  Lemma \ref{lem:PFupper} shows that $\Vset^a \leq \Vset^{\max}$ and $g\br{\Vset^{\max}}\leq \Vset^{\max}$. Note that the map $g$ is monotone, since each component of $g$ is a non-decreasing function of each $\Vset_i$. Further, we have for any $\Vset\in [\Vset^{a},\Vset^{\max}]$ that 
$g\br{\Vset}\geq g\br{\Vset^{a}}\geq \Vset^{a}$, and that $g\br{\Vset}\leq g\br{\Vset^{\max}} \leq \Vset^{\max}$.  We invoke Theorem \ref{thm:Tarski} (Appendix \ref{sec:App}) with $a=\Vset^{a},b=\Vset^{\max}$, and $F=g$, establishing that the fixed-point iteration \eqref{eq:PFfp} converges to a fixed point and in fact converges to a maximal fixed point $\Vset^{\star}$, such that $\Vset^{\star}\geq \Vset$ for every \[\Vset \in \{\Vset \text{ s.t } \Vset^a\leq \Vset \leq \Vset^{\max}, g\br{\Vset}\geq \Vset\}.\] }
\paragraph{Proof that the fixed-point approach succeeds}  \djb{The above argument proves that the fixed point approach succeeds since \eqref{eq:PFfp} converges to the PF solution $\Vset^\star$. }
\paragraph{Proof that the convex relaxation approach succeeds}

\djb{By scaling each constraint in \eqref{eq:PFRelaxOpt} by $\Btot_i$ and rearranging, the feasible set of \eqref{eq:PFRelaxOpt} can be written as $g\br{\Vset}\geq \Vset$. Since there is a power flow solution, \eqref{eq:PFRelaxOpt} is feasible. Given any feasible solution of $\Vset^\prime$ of \eqref{eq:PFRelaxOpt}, the argument from the beginning of this proof with $\Vset^a=\Vset^\prime$ establishes that $\Vset^\star\geq \Vset^\prime$.} Since the objective of \eqref{eq:PFRelaxOpt} is strictly increasing in each component of $\Vset$, $\Vset^\star$ must be the optimal solution since $\Vset^\star \geq \Vset^\prime$ for every  feasible solution $\Vset^\prime$. Thus, if there is a power flow solution, \eqref{eq:PFRelaxOpt} has a unique optimizer that satisfies \eqref{eq:PFredeq} and hence the relaxation approach succeeds.
\paragraph{Proof that the energy function approach succeeds} 

{\tb Define a new mapping $\jmap{\tilde{g}}{\real^n}{\real^n}$ as $\tilde{g}(\Vlog)=\mathbf{B}_{\rm red}(\expb{\Vlog}-g\br{\expb{\Vlog}})$, where $\gamma \in \R^n$.
Lemma \ref{lem:EF} shows that \djb{$\frac{\partial \tilde{g}}{\partial \gamma}$ is symmetric and that} the convexity conditions \eqref{eq:EnergyFunCondvcs1} are equivalent to the condition $\frac{\partial \tilde{g}}{\partial \gamma} \succeq 0$ on the Jacobian of $\tilde{g}$. Further, since \djb{$\tilde{g}_i\br{\Vlog}=\Btot_i\br{ \expb{\Vlog_i} - g_i\br{\expb{\Vlog_i}}}$} and $g_i$ is increasing in \ each component of $\Vlog$, $\frac{\partial \tilde{g}_i}{\partial \Vlog_k}\leq 0$ for each $k \neq i$. Thus, the matrix $\frac{\partial \tilde{g}}{\partial \Vlog}$ is a Z-matrix.} We rewrite \eqref{eq:PFRelaxOpt} as an optimization problem where the decision variables are $\gamma=\log\br{\Vset}$ as follows:
\begin{align*}
\underset{\Vlog}{\text{maximize}} \,\, \sum_i w_i \gamma_i \qquad \text{subject to}
\,\,\quad \tilde{g}\br{\gamma} \leq q\,,
\end{align*}
and the unique optimal solution (from part (b)) is $\log\br{\Vset^\star}$. Writing the KKT conditions for this problem establishes existence of a vector $\lambda \geq 0$ such that $\frac{\partial \tilde{g}}{\partial \gamma}\lambda = w.$
By Lemma \ref{thm:Z}, this implies that $\frac{\partial \tilde{g}}{\partial \gamma}$ must be positive definite at the optimal solution. Define $\gamma^\star=\log\br{\Vset^\star}$ and $\theta^\star$ such that $\theta^\star_i-\theta^\star_k=\arcsin\br{{s_{ik}}/{\sqrt{\Vset^\star_i\Vset^\star_k}}}$, with $\theta_0^\star = 0$. The positive definiteness of $\frac{\partial \tilde{g}}{\partial \gamma}$ at $\gamma = \gamma^\star$ shows that the energy function optimization problem \eqref{eq:Efunconvex} has a stationary point $(\gamma^\star,\theta^\star)$ in the interior of the feasible set. Since the energy function is strongly convex in the interior, this stationary point is an isolated local minimum. Since \eqref{eq:Efunconvex} is a convex problem, this stationary point is the unique global optimum.

\emph{Proof of converse (ii) $\Longrightarrow$ (i):} Conversely, if there are no power flow solutions, then \eqref{eq:PFRelaxOpt} must be infeasible.  If this was not true, then $\exists \Vset^a$ \djb{such that $g\br{\Vset^a}\geq \Vset^a$. Using Lemma \ref{lem:PFupper}, $\exists \Vset^{\max}$ such that $g\br{\Vset^{\max}}\leq \Vset^{\max}$, $\Vset^a\leq \Vset^{\max}$ so that $\Vset^a \leq g\br{\Vset^a}\leq g\br{\Vset^{\max}} \leq \Vset^{\max} $. Hence by Theorem \ref{thm:Tarski}, there is a fixed point $g\br{\Vset}=\Vset$ and hence a  power flow solution exists (which is a contradiction)}. Thus, the convex relaxation must be infeasible and the convex relaxation approach cannot succeed. Since $g$ has no fixed points (as there are no power flow solutions) the iterative procedure cannot converge to a fixed point and hence the fixed-point approach fails. Finally, the energy function approach succeeds only if the optimum of \eqref{eq:Efunconvex} is a stationary point of the energy function. Since stationary points are power flow solutions, there are no stationary points and hence the energy function approach cannot succeed.
\end{proof}

{\tb Theorem \ref{thm:MainExist} says that the three approaches from Section \ref{Sec:Algorithms} all find a power flow solution if and only if a power flow solution exists, and in fact, they all find the same solution. If any of them fail, then no power flow solution exists. 
\djb{To state our next result,} which establishes several desirable properties of the power flow solution, we define some notation. Let}
$$
\mathcal{S}:=\{\br{p,q} \in \mathbb{R}^{n}\times \mathbb{R}^n \text{ s.t. } \eqref{eq:PFRelaxOpt} \text{ is feasible}\}\,
$$
be the feasible set of injections. For each $\br{p,q}\in \mathcal{S}$, let $\Vset\br{p,q}$ be the unique optimal solution of \eqref{eq:PFRelaxOpt}, and \djb{define $\theta\br{p,q} \in \mathbb{R}^{n+1}$ as the unique solution to}
\djb{
$$
\theta_i\!-\!\theta_k=\arcsin\bigbr{\frac{s_{ik}\br{p}}{\sqrt{\Vset_i\br{p,q},\Vset_k\br{p,q}}}}\,\quad  \forall \br{i,k}\in \E\, ,\, \theta_0=0
$$.}

\begin{theorem}[{\tb Properties of High-Voltage Solution}]\label{thm:Properties}
The following statements hold:
\begin{enumerate}[wide, labelwidth=!, labelindent=0pt]
\item[(i)] $\mathcal{S}$ is a convex set and the map $\br{p,q} \mapsto \br{\Vset\br{p,q},\theta\br{p,q}}$
is continuous on $\mathcal{S}$;
\item[(ii)] for any $\br{p,q}\not\in\mathcal{S}$, there are no solutions to \eqref{eq:PFfinal};
\item[(iii)] for any $\br{p,q} \in \mathcal{S}$, $\br{\Vset\br{p,q},\theta\br{p,q}}$ {\tb  is the \emph{unique solution} to the power flow equations that satisfies the following properties}:
\begin{enumerate}
\item[]\emph{\textbf{High-voltage:}} $\Vset\br{p,q}\geq \Vset^\prime$ for every solution $\Vset^\prime$ of \eqref{eq:PFredeq}.
\item[]\emph{\textbf{Stability:}} $\br{\Vset\br{p,q},\theta\br{p,q}}$ lies in the domain of convexity of the energy function (i.e., satisfies \eqref{eq:EnergyFunCondvcs1});
\item[]\emph{\textbf{Voltage-regularity:}} the matrix $\frac{\partial \Vset}{\partial q}$ is element-wise positive at the solution.
\end{enumerate}
\end{enumerate}
\end{theorem}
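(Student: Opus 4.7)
The plan is to treat the three parts of the theorem in turn, leaning on Theorem \ref{thm:MainExist} and the structural facts established in its proof (in particular, positive-definiteness of the Z-matrix $\partial \tilde{g}/\partial \gamma$ at the high-voltage solution). Part (ii) is immediate: if $(p,q)\notin\mathcal{S}$, then the relaxation \eqref{eq:PFRelaxOpt} is infeasible, so by Theorem \ref{thm:MainExist} no power flow solution exists.

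For part (i), I would establish convexity of $\mathcal{S}$ by rewriting \eqref{eq:PFRelaxOpt} in lifted form: introduce nonnegative auxiliary variables $c_{ik}$ and replace \eqref{eq:PFred} by the linear inequalities $\Btot_i v_i - \sum_{k\in\Neb{i}} B_{ik} c_{ik} \leq q_i$ together with the rotated second-order cone constraints $c_{ik}^2 + s_{ik}(p)^2 \leq v_i v_k$. Since $s_{ik}(p)$ is linear in $p$ through the invertible linear system \eqref{eq:PFrealeasy}, the lifted feasible set is jointly convex in $(v,c,p,q)$, and $\mathcal{S}$ is its convex projection onto $(p,q)$. Continuity of $(p,q)\mapsto(\Vset(p,q),\theta(p,q))$ on the interior of $\mathcal{S}$ follows from the implicit function theorem applied to $\tilde{g}(\gamma)=q$: the positive-definite Jacobian $\partial \tilde{g}/\partial \gamma$ at $\gamma^\star=\log \Vset(p,q)$ (established in the proof of Theorem \ref{thm:MainExist}) gives smooth dependence on $q$, linearity of $s(p)$ handles the dependence on $p$, and $\theta$ inherits continuity from the $\arcsin$ formula since $c_{ik}>0$ strictly at a stable solution.

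For part (iii), the high-voltage property is essentially a restatement of the Tarski/fixed-point argument in the proof of Theorem \ref{thm:MainExist}: applied with $\Vset^a$ taken as any other power flow solution, it shows that $\Vset(p,q)$, which equals the maximal fixed point of $g$, dominates that solution componentwise. Stability (membership in the convexity domain \eqref{eq:EnergyFunCondvcs1}) is likewise built into that proof, since positive-definiteness of $\partial \tilde{g}/\partial \gamma$ at $\gamma^\star$ is shown there to be equivalent to \eqref{eq:EnergyFunCondvcs1}. Uniqueness among solutions enjoying these properties is automatic from the high-voltage domination. For voltage-regularity, I would invoke the implicit function theorem a second time to write $\partial \gamma/\partial q = \inv{\partial \tilde{g}/\partial \gamma}$. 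Since this Jacobian is a symmetric positive-definite Z-matrix, it is a Stieltjes (symmetric M-) matrix; it is irreducible because the tree is connected and each edge-indexed off-diagonal entry is proportional to $B_{ik}\sqrt{v_i v_k}/c_{ik}$ with $c_{ik}>0$. An irreducible nonsingular M-matrix has an element-wise strictly positive inverse, so $\partial \gamma/\partial q>0$ element-wise, and hence $\partial v/\partial q = \diagb{v}\cdot \partial \gamma/\partial q > 0$ element-wise.

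The main obstacle I anticipate is continuity at the boundary of $\mathcal{S}$, where $\partial \tilde{g}/\partial \gamma$ degenerates and the implicit function theorem no longer applies. My plan is to resolve this by monotonicity: voltage-regularity on the interior implies $q \mapsto \Vset(p,q)$ is componentwise monotone, so for any sequence $(p^{(k)},q^{(k)})\to(p^\infty,q^\infty)\in\partial \mathcal{S}$ from within $\mathcal{S}$, the sequence $\Vset(p^{(k)},q^{(k)})$ can be sandwiched between monotone sub-sequences whose common limit must, by closedness of the lifted feasible set and maximality of the high-voltage solution, coincide with $\Vset(p^\infty,q^\infty)$. A secondary point requiring care is the irreducibility check in the voltage-regularity step, which reduces to verifying that every edge-indexed entry of the Jacobian is strictly negative; this holds because stability forces $c_{ik}>0$ at the high-voltage solution.
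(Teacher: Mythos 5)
Your treatment of parts (ii) and (iii) matches the paper's proof essentially step for step: (ii) is the same one-line consequence of Theorem \ref{thm:MainExist}; the high-voltage and stability claims are read off, exactly as in the paper, from the Tarski/maximal-fixed-point argument and the positive definiteness of $\frac{\partial \tilde{g}}{\partial \gamma}$ established there; and voltage-regularity is obtained by the identical mechanism, namely that $\frac{\partial \tilde{g}}{\partial \gamma} = \frac{\partial q}{\partial v}\,\mathrm{diag}(v)$ is an irreducible positive-definite Z-matrix, hence an M-matrix with entry-wise positive inverse by Lemma \ref{thm:Z}. Your convexity argument for $\mathcal{S}$ in part (i) is also the paper's (joint convexity of the constraint set in $(v,p,q)$ followed by projection); whether one phrases the constraints via concavity of $\sqrt{xy-z^2}$ or via rotated second-order cones is immaterial.

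The one place you genuinely diverge is continuity of $(p,q)\mapsto(v(p,q),\theta(p,q))$, and there your argument has a gap. The paper avoids the implicit function theorem entirely: it notes that \eqref{eq:PFRelaxOpt} has a strongly concave objective, hence a unique maximizer, and invokes the standard continuity result for solution maps of parametric convex programs, which covers all of $\mathcal{S}$ at once. Your IFT route is fine on the interior of $\mathcal{S}$, but the proposed boundary repair does not go through as stated: the sandwich from below requires the high-voltage solution at injections $(p^{\infty}, q^{\infty}-\epsilon\vones[n])$ to exist, and since decreasing $q$ shrinks the feasible set $\{v : g(v)\geq v\}$ while $(p^{\infty},q^{\infty})$ sits on the boundary of $\mathcal{S}$, those perturbed injections will generically lie outside $\mathcal{S}$; moreover the variation in $p^{(k)}$ is not controlled by monotonicity in $q$ at all. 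To close part (i) you should either restrict the continuity claim to the interior or replace the sandwiching step with the parametric-optimization argument the paper uses.
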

\begin{proof}


\textbf{(i):} Since $\sqrt{xy-z^2}$ is a concave function in $\br{x,y,z}$, the feasible set of \eqref{eq:PFRelaxOpt} is jointly convex in $\br{\Vset,p,q}$. The injection set $\mathcal{S}$ is simply the projection of this set onto the last two arguments, and is therefore convex. Further, \eqref{eq:PFRelaxOpt} has a strongly concave objective and hence a unique optimum. It follows from standard results in parametric convex optimization \cite[Section 6.1]{IEORNotes} that $\Vset\br{p,q}$ is a continuous map, and hence so is $\theta\br{p,q}$. 

\textbf{(ii):} If $\br{p,q}\not\in\mathcal{S}$, \eqref{eq:PFRelaxOpt} is infeasible and hence there are no power flow solutions. 

{\tb
\textbf{(iii):} The first two properties of the specified solution follow immediately from the proof of Theorem \ref{thm:MainExist}. For the third, that same proof showed that 
\begin{equation}\label{Eq:Jacobians}
\frac{\partial \tilde{g}}{\partial \gamma} = \frac{\partial q}{\partial \log(v)} = \frac{\partial q}{\partial v}\left(\frac{\partial \log(v)}{\partial v}\right)^{-1} = \frac{\partial q}{\partial v}\,\mathrm{diag}(v)
\end{equation}
evaluated at the solution is a positive-definite Z-matrix. \djb{Since the network is connected, this matrix is irreducible.} The result follows by applying Lemma \ref{thm:Z}.
}
\end{proof}

\section{Conclusions}
\label{Sec:Conclusions}
{\tb \djb{We have developed and analyzed several approaches to solving the power flow equations for balanced radial networks with transmission lines characterized by homogeneous ratios of longitudinal impedance parameters}. We showed these approaches are equivalent: they all either succeed and find the high-voltage power flow solution, or they all fail and no power flow solution exists to be found. In the former case, we established several desirable properties of the power flow solution found by each method. While some of these approaches were known, this is the first paper to systematically study \djb{the connections between these approaches}. 

These results form a solid foundation for further investigation. \djb{Immediate future work will study relaxing the assumptions from Section \ref{Sec:Contributions}.} Establishing analogous results for meshed power networks remains an open problem. 

\section*{Acknowledgment}
KD would like to thank the Control of Complex Systems Initiative at PNNL for supporting this work. The authors thank Florian Dorfler for encouraging their work on this problem and Florian Dorfler and Saverio Bolognani for helpful comments and feedback on this work. 

\bibliographystyle{IEEEtran}
\bibliography{IEEEabrv,alias,johnbib,Ref}

\begin{thebibliography}{10}
\providecommand{\url}[1]{#1}
\csname url@samestyle\endcsname
\providecommand{\newblock}{\relax}
\providecommand{\bibinfo}[2]{#2}
\providecommand{\BIBentrySTDinterwordspacing}{\spaceskip=0pt\relax}
\providecommand{\BIBentryALTinterwordstretchfactor}{4}
\providecommand{\BIBentryALTinterwordspacing}{\spaceskip=\fontdimen2\font plus
\BIBentryALTinterwordstretchfactor\fontdimen3\font minus
  \fontdimen4\font\relax}
\providecommand{\BIBforeignlanguage}[2]{{%
\expandafter\ifx\csname l@#1\endcsname\relax
\typeout{** WARNING: IEEEtran.bst: No hyphenation pattern has been}%
\typeout{** loaded for the language `#1'. Using the pattern for}%
\typeout{** the default language instead.}%
\else
\language=\csname l@#1\endcsname
\fi
#2}}
\providecommand{\BIBdecl}{\relax}
\BIBdecl

\bibitem{JM-JWB-JRB:08}
J.~Machowski, J.~W. Bialek, and J.~R. Bumby, \emph{Power System Dynamics},
  2nd~ed.\hskip 1em plus 0.5em minus 0.4em\relax John Wiley \& Sons, 2008.

\bibitem{TVC:00}
T.~Van~Cutsem, ``Voltage instability: phenomena, countermeasures, and analysis
  methods,'' \emph{Proceedings of the IEEE}, vol.~88, no.~2, pp. 208--227,
  2000.

\bibitem{DKM-DM-MN:16}
D.~K. Molzahn, D.~Mehta, and M.~Niemerg, ``Towards topologically-based upper
  bounds on the number of power flow solutions,'' in \emph{{A}merican {C}ontrol
  {C}onference}, Boston, MA, USA, Jul. 2016, pp. 5927--5932.

\bibitem{FD-MC-FB:11v-pnas}
F.~D{\"o}rfler, M.~Chertkov, and F.~Bullo, ``Synchronization in complex
  oscillator networks and smart grids,'' \emph{Proceedings of the National
  Academy of Sciences}, vol. 110, no.~6, pp. 2005--2010, 2013.

\bibitem{prada1998voltage}
R.~Prada and L.~J. Souza, ``Voltage stability and thermal limit: constraints on
  the maximum loading of electrical energy distribution feeders,'' \emph{IEE
  Proceedings-Generation, Transmission and Distribution}, vol. 145, no.~5, pp.
  573--577, 1998.

\bibitem{DM-DKM-KT:16}
D.~Mehta, D.~K. Molzahn, and K.~Turitsyn, ``Recent advances in computational
  methods for the power flow equations,'' in \emph{{A}merican {C}ontrol
  {C}onference}, Boston, MA, USA, Jul. 2016, pp. 1753--1765.

\bibitem{SB-SZ:15}
S.~Bolognani and S.~Zampieri, ``On the existence and linear approximation of
  the power flow solution in power distribution networks,'' \emph{IEEE
  Transactions on Power Systems}, vol.~31, no.~1, pp. 163--172, 2016.

\bibitem{SY-HDN-KST:15}
S.~Yu, H.~D. Nguyen, and K.~S. Turitsyn, ``Simple certificate of solvability of
  power flow equations for distribution systems,'' in \emph{IEEE Power \&
  Energy Society General Meeting}, Denver, CO, USA, Jul. 2015, pp. 1--5.

\bibitem{CW-AB-JYLB-MP:16}
C.~Wang, A.~Bernstein, J.~Y.~L. Boudec, and M.~Paolone, ``Explicit conditions
  on existence and uniqueness of load-flow solutions in distribution
  networks,'' \emph{IEEE Transactions on Smart Grid}, 2016, to Appear.

\bibitem{JWSP:17a}
J.~W. Simpson-Porco, ``A theory of solvability for lossless power flow
  equations -- {P}art {I}: Fixed-point power flow,'' \emph{{IEEE} Transactions
  on Control of Network Systems}, 2017, to appear.

\bibitem{JWSP:17b}
------, ``A theory of solvability for lossless power flow equations -- {P}art
  {II}: Conditions for radial networks,'' \emph{{IEEE} Transactions on Control
  of Network Systems}, 2017, to appear.

\bibitem{KD-SL-MC:15b}
K.~Dvijotham, S.~Low, and M.~Chertkov, ``Solving the power flow equations: {A}
  monotone operator approach,'' \emph{Arxiv}, vol. abs/1506.08472, 2015.

\bibitem{LG-NL-UT-SL:12}
L.~Gan, N.~Li, U.~Topcu, and S.~Low, ``On the exactness of convex relaxation
  for optimal power flow in tree networks,'' in \emph{{IEEE} Conf.\ on Decision
  and Control}, Maui, Hawaii, USA, Dec 2012, pp. 465--471.

\bibitem{SHL:14a}
S.~H. Low, ``Convex relaxation of optimal power flow, part i: Formulations and
  equivalence,'' \emph{IEEE Transactions on Control of Network Systems},
  vol.~1, no.~1, pp. 15--27, 2014.

\bibitem{JL-DT-BZ:14}
J.~Lavaei, D.~Tse, and B.~Zhang, ``Geometry of power flows and optimization in
  distribution networks,'' \emph{IEEE Transactions on Power Systems}, vol.~29,
  no.~2, pp. 572--583, 2014.

\bibitem{SHL:14b}
S.~H. Low, ``Convex relaxation of optimal power flow, part ii: Exactness,''
  \emph{IEEE Transactions on Control of Network Systems}, vol.~1, no.~2, pp.
  1--13, 2014.

\bibitem{DjEnergyFun}
K.~{Dvijotham}, S.~{Low}, and M.~{Chertkov}, ``{Convexity of Energy-Like
  Functions: Theoretical Results and Applications to Power System
  Operations},'' \emph{ArXiv e-prints}, Jan. 2015.

\bibitem{molzahn2012sufficient}
D.~K. Molzahn, B.~C. Lesieutre, and C.~L. DeMarco, ``A sufficient condition for
  power flow insolvability with applications to voltage stability margins,''
  \emph{Power Systems, IEEE Transactions on}, vol.~28, no.~3, pp. 2592--2601,
  2012.

\bibitem{jbar2006}
R.~A. Jabr, ``Radial distribution load flow using conic programming,''
  \emph{IEEE Transactions on Power Systems}, vol.~21, no.~3, pp. 1458--1459,
  Aug 2006.

\bibitem{HDC-MEB:90}
H.~Chiang and M.~Baran, ``On the existence and uniqueness of load flow solution
  for radial distribution power networks,'' \emph{IEEE Transactions on Circuits
  and Systems}, vol.~37, no.~3, pp. 410--416, 1990.

\bibitem{KNM-HDC:00}
K.~N. Miu and H.-D. Chiang, ``Existence, uniqueness, and monotonic properties
  of the feasible power flow solution for radial three-phase distribution
  networks,'' \emph{IEEE Transactions on Circuits and Systems~I: Fundamental
  Theory and Applications}, vol.~47, no.~10, pp. 1502--1514, 2000.

\bibitem{varaiya1985direct}
P.~Varaiya, F.~F. Wu, and R.-L. Chen, ``Direct methods for transient stability
  analysis of power systems: Recent results,'' \emph{Proceedings of the IEEE},
  vol.~73, no.~12, pp. 1703--1715, 1985.

\bibitem{IEORNotes}
``Parameteric optimization, ieor 265 lecture notes, uc berkeley,''
  \url{http://ieor.berkeley.edu/~ieor265/lecture_notes/ieor265_lec11.pdf},
  accessed 03-04-2017.

\bibitem{tarski1955lattice}
A.~Tarski \emph{et~al.}, ``A lattice-theoretical fixpoint theorem and its
  applications,'' \emph{Pacific journal of Mathematics}, vol.~5, no.~2, pp.
  285--309, 1955.

\bibitem{Ortega}
J.~M. Ortega and W.~C. Rheinboldt, \emph{Iterative solution of nonlinear
  equations in several variables}.\hskip 1em plus 0.5em minus 0.4em\relax SIAM,
  2000.

\bibitem{PLEMMONS1977175}
R.~Plemmons, ``M-matrix characterizations.i -- {N}onsingular {M}-matrices,''
  \emph{Linear Algebra and its Applications}, vol.~18, no.~2, pp. 175 -- 188,
  1977.

\end{thebibliography}
\appendices
\section{Supporting lemmas}\label{sec:App}
{\tb 
\begin{lemma}\label{lem:Lemb}
There exist $\Vset \in \mathbb{R}^n_{>0}$ and $c \in \mathbb{R}^{|\E|}$ satisfying \eqref{Eq:Relaxation1}
if and only if there exists $\Vset \in \mathbb{R}^n_{>0}$ satisfying
\begin{align}
& \Btot_i \Vset_i-\sum_{k \in \Neb{i}} B_{ik}\sqrt{\Vset_i\Vset_k-s_{ik}^2} \leq q_i\,, \quad \forall i \in \sqb{n}\,.
\label{eq:Relb}
\end{align}
\end{lemma}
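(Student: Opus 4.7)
The lemma asserts the equivalence of two feasibility problems that differ only in whether the edge variables $c_{ik}$ are present as explicit decision variables or have been eliminated. The plan is to handle each implication separately by direct substitution, using the fact that $B_{ik}>0$ so that the linear term $-B_{ik}c_{ik}$ in the first inequality of \eqref{Eq:Relaxation1} is minimized (i.e., the constraint is tightest) when $c_{ik}$ is taken as large as the second inequality of \eqref{Eq:Relaxation1} allows.

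\emph{Forward direction} ($\eqref{Eq:Relaxation1}\Rightarrow\eqref{eq:Relb}$): Suppose $(v,c)$ satisfies \eqref{Eq:Relaxation1}. From the quadratic inequality $s_{ik}^2+c_{ik}^2\leq v_iv_k$ I immediately obtain both $v_iv_k\geq s_{ik}^2$ (so the square root in \eqref{eq:Relb} is well defined) and $c_{ik}\leq |c_{ik}|\leq \sqrt{v_iv_k-s_{ik}^2}$. Since $B_{ik}>0$, multiplying and summing over $k\in\Neb{i}$ gives $\sum_{k\in\Neb{i}}B_{ik}c_{ik}\leq \sum_{k\in\Neb{i}}B_{ik}\sqrt{v_iv_k-s_{ik}^2}$. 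Substituting into the linear inequality of \eqref{Eq:Relaxation1} yields \eqref{eq:Relb} for the same $v$.

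\emph{Reverse direction} ($\eqref{eq:Relb}\Rightarrow\eqref{Eq:Relaxation1}$): Suppose $v\in\mathbb{R}^n_{>0}$ satisfies \eqref{eq:Relb}; well-posedness of the square root forces $v_iv_k\geq s_{ik}^2$ for each $(i,k)\in\E$. I then define, for every edge $(i,k)\in\E$,
\[
c_{ik} := \sqrt{v_iv_k-s_{ik}^2}\,.
\]
By construction $s_{ik}^2+c_{ik}^2=v_iv_k$, so the second family of inequalities in \eqref{Eq:Relaxation1} holds (with equality). Plugging this choice of $c$ into the first inequality of \eqref{Eq:Relaxation1} reproduces exactly \eqref{eq:Relb}, which holds by assumption, completing the construction of the pair $(v,c)$.

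\emph{Main obstacle.} There is essentially none, since the argument is a one-line extremal substitution in each direction. The only point requiring minor care is verifying that $v_iv_k\geq s_{ik}^2$ in both directions so that the replacement $c_{ik}\leftrightarrow\sqrt{v_iv_k-s_{ik}^2}$ is meaningful; this is supplied by the second inequality of \eqref{Eq:Relaxation1} in one direction and by the well-posedness of \eqref{eq:Relb} in the other. This confirms that eliminating $c$ in favour of its extremal value is without loss of generality, which is precisely why the convex program \eqref{eq:PFRelaxOpt} can be stated purely in terms of $v$.
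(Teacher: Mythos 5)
Your proof is correct and takes essentially the same approach as the paper's: both arguments reduce to the observation that, since $B_{ik}>0$, the first inequality of \eqref{Eq:Relaxation1} is easiest to satisfy when $c_{ik}$ takes its extremal value $\sqrt{v_iv_k-s_{ik}^2}$ permitted by the conic constraint. The paper compresses this into a single remark, whereas you spell out the two implications explicitly; the content is identical.
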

\begin{proof}
The second constraint of \eqref{Eq:Relaxation1} implies that
$$
-\sqrt{\Vset_i\Vset_k-s_{ik}^2}\leq c_{ik}\leq \sqrt{\Vset_i\Vset_k-s_{ik}^2}\,.
$$
If \eqref{Eq:Relaxation1} does not hold for $c_{ik}=\sqrt{\Vset_i\Vset_k-s_{ik}^2}$, it cannot hold for any other value of $c_{ik}$. We can therefore rewrite the relaxation as $\Btot_i \Vset_i-\sum_{k \in  \Neb{i}} B_{ik}\sqrt{\Vset_i\Vset_k-s_{ik}^2}\leq q_i\,$.
\end{proof}
\begin{lemma}\label{lem:PFupper}
Any solution to $g\br{\Vset}\geq \Vset$ satisfies \djb{$\Vset \leq \Vset^{\max}$.}
Further, we have $g\br{\Vset^{\max}}\leq \Vset^{\max}$.
\end{lemma}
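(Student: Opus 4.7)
The strategy is to linearize the nonlinear fixed-point condition $g(\Vset)\geq \Vset$ using the two-step bound $\sqrt{\Vset_i\Vset_k-s_{ik}^2}\leq \sqrt{\Vset_i\Vset_k}\leq (\Vset_i+\Vset_k)/2$ (the second inequality being AM-GM), and then invert the resulting linear inequality in terms of the reduced Laplacian $\Lap_{\rm red}$.

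For the first claim, rewriting the inequality $g(\Vset)\geq \Vset$ componentwise yields
$$
\Btot_i \Vset_i - \sum_{k\in\Neb{i}}\Bpar_{ik}\sqrt{\Vset_i\Vset_k-s_{ik}^2}\leq q_i,\qquad i\in[n].
$$
Applying the AM-GM bound above to the square root (this step tacitly requires $\Vset_i\Vset_k\geq s_{ik}^2$, which is forced by the very constraint we are analyzing), and using $\sum_{k\in\Neb{i}}\Bpar_{ik}=\Btot_i$, I collapse the inequality to the linear statement
$$
\Btot_i\Vset_i-\sum_{k\in\Neb{i}}\Bpar_{ik}\Vset_k \leq 2 q_i,\qquad i\in[n].
$$
Next, I use $\Vset_0=1$ to separate the substation term and invoke the identity $\Lap_{\rm red}\vones[n]=-\Lap_{\mathrm{r}0}$, which follows from $\Lap\vones[n+1]=0$. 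This rewrites the system as $\Lap_{\rm red}(\Vset-\vones[n])\leq 2q$. Because $\Lap_{\rm red}$ is the reduced Laplacian of a connected graph, it is a nonsingular $M$-matrix, so $\Lap_{\rm red}^{-1}$ is entry-wise nonnegative; this lets me multiply through without reversing inequalities, obtaining $\Vset\leq \vones[n]+2\Lap_{\rm red}^{-1}q=\Vset^{\max}$.

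For the second claim ($g(\Vset^{\max})\leq \Vset^{\max}$), I run the same chain of inequalities in the reverse direction, starting from the definition $\Lap_{\rm red}(\Vset^{\max}-\vones[n])=2q$. Evaluating at $\Vset^{\max}$ gives the linear identity $\Btot_i\Vset^{\max}_i-\sum_{k\in\Neb{i}}\Bpar_{ik}\Vset^{\max}_k=2q_i$ with equality, and hence $\Btot_i\Vset^{\max}_i-\sum_{k\in\Neb{i}}\Bpar_{ik}(\Vset^{\max}_i+\Vset^{\max}_k)/2=q_i$. Since $\sqrt{\Vset^{\max}_i\Vset^{\max}_k-s_{ik}^2}\leq (\Vset^{\max}_i+\Vset^{\max}_k)/2$, the nonlinear expression is bounded below by $q_i$, which is exactly the statement $g(\Vset^{\max})\leq \Vset^{\max}$ after rearrangement.

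The only delicate point is the well-posedness of $g(\Vset^{\max})$: one must check $\Vset^{\max}_i\Vset^{\max}_k\geq s_{ik}^2$ for every $(i,k)\in\E$. I expect this to follow either from the standing working assumption that the relaxation \eqref{Eq:Relaxation1} is the object of analysis (and so we are implicitly restricting attention to parameters where such square roots are real at the relevant voltages), or from the construction of $s_{ik}(p)$ itself combined with the monotonicity already used. If needed, this could be inserted as a parenthetical remark rather than a separate lemma, since the main algebraic flow of the argument is unaffected.
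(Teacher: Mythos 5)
Your proof is correct and follows essentially the same route as the paper's: the same two-step bound $\sqrt{\Vset_i\Vset_k-s_{ik}^2}\leq\sqrt{\Vset_i\Vset_k}\leq(\Vset_i+\Vset_k)/2$, the same reduction to a linear inequality in $\Lap_{\rm red}$ using $\Lap_{\rm red}\vones[n]=-\Lap_{\mathrm{r}0}$, and the same M-matrix inversion step (you invoke $\Lap_{\rm red}^{-1}\geq 0$, i.e.\ item (iv) of Lemma \ref{thm:Z}, where the paper applies the equivalent item (v) to $\Btot_{\rm red}^{-1}\Lap_{\rm red}/2$). The well-posedness caveat you flag for $g(\Vset^{\max})$ is likewise left implicit in the paper, and is resolved in every context where the lemma is invoked because there one always has a point $\Vset^a\leq\Vset^{\max}$ at which $g$ is defined, forcing $\Vset^{\max}_i\Vset^{\max}_k\geq\Vset^a_i\Vset^a_k\geq s_{ik}^2$.
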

\begin{proof}
The proof relies on the inequality
$\sqrt{\Vset_i\Vset_k-s_{ik}^2} \leq \sqrt{\Vset_i\Vset_k}\leq \frac{\Vset_i+\Vset_k}{2}$.
We apply this to $g$ to obtain:
$$
\begin{aligned}
g_i\br{\Vset}-\Vset_i &\leq \frac{q_i}{\Btot_i}+\sum_{k \in \Neb{i}} \frac{B_{ik}}{\Btot_i}\frac{\Vset_i+\Vset_k}{2}-\Vset_i\\ 
&= \frac{q_i + \djb{\sum_{k\in\Neb{i}}} \frac{B_{ik}}{2}(\Vset_k-\Vset_i)}{\Btot_i}
\end{aligned}
$$
\djb{In vector notation, this inequality can be written as
\begin{align}
g\br{v}-v &\leq  \inv{\mathbf{B}_{\rm red}}\left(q-\frac{1}{2}(\Lap_{\rm red}\Vset+\Lap_{\mathrm{r}0})\right)\nonumber \\
&= \frac{1}{2}\inv{\mathbf{B}_{\rm red}}{\Lap_{\rm red}}\br{\Vset^{\max}-\Vset} \label{eq:tmpLem2}
\end{align}
where $A = \Btot_{\rm red}^{-1}\Lap_{\rm red}/2$ and we used $-\Lap_{\rm red}^{-1}\Lap_{\mathrm{r}0} = \vones[n]$ and the definition of $\Vset^{\max}$. $A$ is an M-matrix (since $\Lap_{\rm red}$ is a principal submatrix of a Laplacian of a connected graph and hence an M-matrix, $\Btot_{\rm red}$ is diagonal matrix with positive entries). By Lemma \ref{thm:Z}, we find that $\Vset \leq \Vset^{\max}$. Further, plugging in $\Vset=\Vset^{\max}$ in \eqref{eq:tmpLem2}, we obtain $g\br{\Vset^{\max}}\leq \Vset^{\max}$.}
\end{proof}

\begin{lemma}\label{lem:EF}
Define $\tilde{g}(\gamma)=\mathbf{B}_{\rm red}(\exp\br{\gamma}-g\br{\exp\br{\gamma}})$. Then
$2\frac{\partial \tilde{g}}{\partial \gamma}$ is equal to the matrix in the LHS of \eqref{eq:EnergyFunCondvcsa} after substituting $\Vset=\expb{\gamma}$ and $\cx_{ik}=\sqrt{\expb{\gamma_i+\gamma_k}-s_{ik}^2}$.
\end{lemma}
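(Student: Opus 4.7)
The plan is a straightforward direct computation: unfold $\tilde{g}$, differentiate entry by entry via the chain rule, then compare with the matrix on the left-hand side of \eqref{eq:EnergyFunCondvcsa} position by position.

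First I would expand the definition, using $\Btot_i g_i(e^\gamma) = q_i + \sum_{k\in\Neb{i}} B_{ik}\sqrt{e^{\gamma_i+\gamma_k}-s_{ik}^2}$, to obtain
$$
\tilde{g}_i(\gamma) = \Btot_i e^{\gamma_i} - q_i - \sum_{k\in\Neb{i}} B_{ik}\sqrt{e^{\gamma_i+\gamma_k}-s_{ik}^2}.
$$
The $q_i$ term is constant in $\gamma$, so only the first term and the square-root terms carry $\gamma$-dependence. Using the identity $\tfrac{\partial}{\partial\gamma_j}\sqrt{e^{\gamma_i+\gamma_k}-s_{ik}^2} = \tfrac{e^{\gamma_i+\gamma_k}}{2c_{ik}}$ when $j\in\{i,k\}$ and zero otherwise (with the substitutions $v=e^\gamma$ and $c_{ik}=\sqrt{v_iv_k-s_{ik}^2}$), the Jacobian has diagonal entry $\frac{\partial \tilde g_i}{\partial\gamma_i} = \Btot_i v_i - \sum_{k\in\Neb{i}}\tfrac{B_{ik}v_iv_k}{2c_{ik}}$, off-diagonal entry $\frac{\partial \tilde g_i}{\partial\gamma_k} = -\tfrac{B_{ik}v_iv_k}{2c_{ik}}$ for each $k\in\Neb{i}$, and zero elsewhere. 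In particular the Jacobian is symmetric by inspection, since each unordered edge $\{i,k\}$ contributes the same off-diagonal value to both the $(i,k)$ and $(k,i)$ positions.

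Next I would rewrite the right-hand matrix $M$ of \eqref{eq:EnergyFunCondvcsa} as a sum over edges, exploiting the fact that each edge contributes its block $\left[\begin{smallmatrix}1&1\\1&1\end{smallmatrix}\right]_{i,k}$ to exactly four positions: the diagonal entries $(i,i),(k,k)$ and the two off-diagonals $(i,k),(k,i)$. Collecting contributions per index, one sees that $M_{ii}$ equals $2\Btot_i v_i$ minus the sum of the edge coefficients over edges incident to $i$, while $M_{ik}$ equals minus the coefficient associated to the edge $\{i,k\}$ when that edge exists. Matching these to $2\,\partial\tilde g/\partial\gamma$ then reduces to checking the per-edge coefficient, and the identification follows after substituting $v=e^\gamma$ and $c_{ik}=\sqrt{v_iv_k-s_{ik}^2}$.

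The main obstacle is purely bookkeeping. One must be careful that (i)~each oriented edge $(i,k)\in\E$ is counted exactly once on both sides; (ii)~the sum over neighbors in the Jacobian is correctly aggregated into the diagonal of $M$ from all incident edges rather than only the outgoing ones; and (iii)~the substation bus with $v_0=1$ and fixed $\gamma_0$ is handled consistently, so that the resulting Jacobian truly lives in $\mathbb{R}^{n\times n}$ and matches the reduced version of $M$. Once this correspondence is verified, symmetry of $\partial\tilde g/\partial\gamma$ (and hence of its scaled version $2\,\partial\tilde g/\partial\gamma$) is an immediate consequence of the edge-sum structure, completing the proof.
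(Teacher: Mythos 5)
Your overall strategy coincides with the paper's own proof: expand $\tilde g_i(\gamma)=\Btot_i e^{\gamma_i}-q_i-\sum_{k\in\Neb{i}}B_{ik}\sqrt{e^{\gamma_i+\gamma_k}-s_{ik}^2}$, differentiate entry by entry, observe symmetry from the per-edge structure, and match against the edge-sum decomposition of the matrix in \eqref{eq:EnergyFunCondvcsa}. Your bookkeeping concerns (each edge contributing to four positions, the substation term $B_{i0}\sqrt{v_i\cdot 1-s_{i0}^2}$ contributing only to the diagonal so that the Jacobian is genuinely $n\times n$) are all handled the same way there, and your derivative formulas are the correct ones under the stated substitution $v=\expb{\gamma}$.

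The one genuine problem is your final sentence ``the identification follows,'' because with your own (correct) formulas it does not. You obtain the per-edge coefficient $B_{ik}\,v_iv_k/c_{ik}$ (off-diagonal entry $-\tfrac{B_{ik}}{2}v_iv_k/c_{ik}$, diagonal $\Btot_iv_i-\sum_k\tfrac{B_{ik}}{2}v_iv_k/c_{ik}$), whereas the matrix in \eqref{eq:EnergyFunCondvcsa} carries $B_{ik}\sqrt{v_iv_k}/c_{ik}$; these differ by the state-dependent factor $\sqrt{v_iv_k}$, so no global rescaling of $\gamma$ or of the matrix repairs the mismatch. The paper's proof closes this step only by writing $\expb{\gamma_i+\gamma_k}=\sqrt{v_iv_k}$, which is inconsistent with the substitution $v_i=\expb{\gamma_i}$ declared in the lemma (it would require $\expb{\gamma_i}=|V_i|=\sqrt{v_i}$, i.e., $\gamma=\log|V|$, and even then the exponentials produce $v_iv_k$ rather than $\sqrt{v_iv_k}$ up to constants). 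You therefore cannot simply assert the match: you must either prove that the intended edge weight in \eqref{eq:EnergyFunCondvcsa} is $B_{ik}v_iv_k/c_{ik}$ (equivalently, rederive the convexity condition in the $\log v$ coordinates you are actually using), or explicitly identify and justify the normalization under which the two expressions agree. As written, the concluding identification is the missing step of the proof, not a routine verification.
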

\begin{proof}
Using the definitions of $g,\tilde{g}$, we have
\[\tilde{g}_i\br{\Vlog}=\Btot_i \expb{\Vlog_i}-\sum_{k} B_{ik} \sqrt{\expb{\Vlog_i+\Vlog_k}-\sx_{ik}^2}-q_i\,.\]
For any $k \neq i$, we have
\[\frac{\partial \tilde{g}_i}{\partial \Vlog_k}=-\frac{B_{ik}}{2} \frac{\expb{\Vlog_i+\Vlog_k}}{\sqrt{\expb{\Vlog_i+\Vlog_k}-\sx_{ik}^2}}=-\frac{B_{ik}}{2} \frac{\sqrt{\Vset_i\Vset_k}}{\cx_{ik}}\,.\]
For $k=i$, we have
\begin{align*}
\frac{\partial \tilde{g}_i}{\partial \Vlog_i} & =\Btot_i\expb{\Vlog_i} -\sum_k \frac{B_{ik}}{2} \frac{\expb{\Vlog_i+\Vlog_k}}{\sqrt{\expb{\Vlog_i+\Vlog_k}-\sx_{ik}^2}}\\
& =\Btot_i \Vset_i -\sum_k \frac{B_{ik}}{2} \frac{\sqrt{\Vset_i\Vset_k}}{\cx_{ik}}\,.
\end{align*}
Comparing these values with the LHS of \eqref{eq:EnergyFunCondvcsa}, it is easy to see that the matrix on the LHS of \eqref{eq:EnergyFunCondvcsa} is simply $2\frac{\partial \tilde{g}}{\partial \Vlog}$.
\end{proof}}

\begin{theorem}[Knaster-Tarski Theorem \cite{tarski1955lattice}/Kantorovich Lemma \cite{Ortega}]\label{thm:Tarski}
Let $F: [a,b] \mapsto [a,b]$ be a continuous map where $a,b\in \R^n$, \djb{$a\leq b$} such that $F$ is monotone:
\[F\br{x}\geq F\br{x^\prime} \quad \forall x,x^\prime \in [a,b]\,,\,\, x\geq x^\prime.\]
Define $A=\{x\in [a,b] :F\br{x}\geq x\},B=\{x \in [a,b] :F\br{x}\leq x\}$.
\djb{$F$ has a maximal fixed point $x^\star$ ($x \leq x^\star \, \forall x \in A$) and a minimal fixed point $x_\star$ ($x_\star \leq x \, \forall x \in B$). The iteration $x^{(i+1)} \gets F\br{x^{(i)}}$ initialized at $x^{(0)} = b$ converges to $x^\star$.}
\end{theorem}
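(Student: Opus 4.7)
The plan is to run the iteration $x^{(k+1)}=F(x^{(k)})$ from two initial conditions, $x^{(0)}=b$ and $x^{(0)}=a$, and show that the two sequences are monotone and bounded, converging respectively to the maximal fixed point $x^\star$ and the minimal fixed point $x_\star$. Since $F$ maps $[a,b]$ into itself, the whole trajectory stays in the compact box $[a,b]$, so the componentwise sequences $\{x^{(k)}_j\}$ are bounded sequences of reals and all we need is monotonicity to invoke the scalar monotone convergence theorem componentwise.

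First, starting from $x^{(0)}=b$: since $F(b)\in[a,b]$ we have $F(b)\leq b$, i.e., $x^{(1)}\leq x^{(0)}$. Applying monotonicity of $F$ to this inequality gives $F(F(b))\leq F(b)$, and by induction $x^{(k+1)}\leq x^{(k)}$ for all $k$. The bounded, componentwise-decreasing sequence therefore converges to some $x^\star\in[a,b]$, and continuity of $F$ yields $F(x^\star)=x^\star$. This already establishes the last sentence of the statement. Symmetrically, starting from $x^{(0)}=a$: $F(a)\geq a$ by $F(a)\in[a,b]$, monotonicity propagates the inequality to give an increasing sequence, and its componentwise limit $x_\star\in[a,b]$ is a fixed point.

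Next, I would prove that $x^\star$ dominates every element of $A$. Take any $y\in A$, so $F(y)\geq y$ and $y\leq b=x^{(0)}$. By monotonicity, $F(y)\leq F(x^{(0)})=x^{(1)}$, and combining with $y\leq F(y)$ gives $y\leq x^{(1)}$. Inductively, if $y\leq x^{(k)}$ then $F(y)\leq F(x^{(k)})=x^{(k+1)}$, so $y\leq F(y)\leq x^{(k+1)}$. Passing to the limit yields $y\leq x^\star$, which is exactly the maximality claim. An entirely symmetric argument, using $y\in B$ (so $F(y)\leq y$ and $y\geq a$) and the increasing sequence from $a$, shows $y\geq x_\star$ in the limit, establishing minimality of $x_\star$.

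No step here is genuinely hard, since the paper's statement only requires convergence of the iteration from $b$ and the two extremality properties; the tricky conceptual point (absent in general lattices) is subsumed by working in $\mathbb{R}^n$, where we can appeal directly to componentwise real monotone convergence to produce the limits before knowing they are fixed points. The only subtlety to flag is that continuity of $F$ is essential for the limit to be a fixed point, unlike the purely order-theoretic Knaster-Tarski theorem on complete lattices, and that the monotonicity of $F$ is used three times: once to guarantee the iterate from $b$ is decreasing, once for the iterate from $a$ to be increasing, and once to propagate the comparison with an arbitrary $y\in A$ (resp.\ $y\in B$) along the iteration.
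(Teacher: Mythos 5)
Your proof is correct, and it is worth noting that the paper itself offers no proof of this statement at all: Theorem~\ref{thm:Tarski} is stated in the appendix purely by citation to Tarski \cite{tarski1955lattice} and to the Kantorovich lemma in Ortega--Rheinboldt \cite{Ortega}. So there is no in-paper argument to match; what you have supplied is a self-contained proof, and it is essentially the standard proof of the Kantorovich lemma. Each step checks out: $F(b)\leq b$ follows from $F([a,b])\subseteq[a,b]$, monotonicity propagates this to a componentwise decreasing sequence, boundedness plus scalar monotone convergence applied coordinatewise gives a limit in $[a,b]$, and continuity turns the limit into a fixed point; the inductive comparison $y\leq x^{(k)} \Rightarrow y\leq F(y)\leq F(x^{(k)})=x^{(k+1)}$ for $y\in A$ correctly yields maximality in the limit, with the symmetric argument from $a$ handling $x_\star$ and $B$. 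Your closing remarks identify the right conceptual contrast: the purely order-theoretic Knaster--Tarski route would define $x^\star=\sup A$ and needs no continuity, but it does not by itself deliver convergence of the iteration $x^{(i+1)}\gets F(x^{(i)})$ from $b$, which is the part of the statement the paper actually uses (in the proof of Theorem~\ref{thm:MainExist}); your iterative argument needs continuity, which the hypotheses grant, and proves existence, extremality, and convergence in one pass. The only cosmetic caution is that you reuse the symbol $x^{(k)}$ for both the descending and ascending sequences, so in a written version you should give the sequence started at $a$ its own name.
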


\begin{lemma}[\cite{PLEMMONS1977175}]\label{thm:Z}
If $A \in \mathbb{R}^{n \times n}$ is an irreducible Z-matrix, the following statements are equivalent: (i) $A$ is an M-matrix; (ii) there exists $x \geq 0$ such that $Ax > 0$; (iii) $-A$ is Hurwitz; (iv) $A^{-1} > 0$ entry-wise; \djb{(v) $Ax \geq 0$ entry-wise $\Rightarrow$ $x \geq 0$ entry-wise for all $x \in \mathbb{R}^n$.}
\end{lemma}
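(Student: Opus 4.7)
The plan is to set up a splitting of $A$ via the Perron--Frobenius theorem and then prove the five conditions equivalent by a cyclic chain of implications. First I would pick $s > 0$ large enough that $B := sI - A$ is entry-wise non-negative (possible since $A$ is a Z-matrix), giving the canonical representation $A = sI - B$ with $B \geq 0$. Because $A$ is irreducible, so is $B$, and Perron--Frobenius yields a simple dominant eigenvalue $\rho = \rho(B) \geq 0$ with strictly positive right and left Perron eigenvectors $u, w > 0$ satisfying $Bu = \rho u$ and $w^\top B = \rho w^\top$. The eigenvalues of $A$ are then $s - \mu$ for each eigenvalue $\mu$ of $B$, and the eigenvalue of $A$ with smallest real part is exactly $s - \rho$. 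I would adopt the standard convention that a (nonsingular) M-matrix is a Z-matrix with $s > \rho(B)$.

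I would close the cycle (i) $\Rightarrow$ (iv) $\Rightarrow$ (v) $\Rightarrow$ (ii) $\Rightarrow$ (iii) $\Rightarrow$ (i). For (i) $\Rightarrow$ (iv): if $s > \rho$, then $\rho(B/s) < 1$, so the Neumann series converges and $A^{-1} = s^{-1} \sum_{k \geq 0} (B/s)^k \geq 0$ entry-wise; strict positivity follows from the standard fact that for an irreducible non-negative matrix of order $n$ the matrix $(I + B)^{n-1}$ has all entries strictly positive, and this positivity propagates into the Neumann series. Implication (iv) $\Rightarrow$ (v) is immediate: $x = A^{-1}(Ax)$ is the product of an entry-wise positive matrix with a non-negative vector, hence non-negative. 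For (v) $\Rightarrow$ (ii): note first that (v) forces $A$ to be injective, since $Ax = 0$ gives both $Ax \geq 0$ and $A(-x) \geq 0$, whence $x \geq 0$ and $-x \geq 0$; thus $A$ is invertible, and picking any strictly positive $y$ and setting $x = A^{-1}y$ yields $x \geq 0$ by (v) together with $Ax = y > 0$.

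For (ii) $\Rightarrow$ (iii) I would exploit the left Perron eigenvector: left-multiplying $Ax > 0$ by $w^\top > 0$ gives $w^\top A x = w^\top (sI - B)x = (s - \rho)\, w^\top x$. Since $Ax > 0$ forces $x \neq 0$, and $x \geq 0$, $w > 0$ gives $w^\top x > 0$; the left-hand side is also strictly positive, so $s - \rho > 0$. Consequently every eigenvalue of $A$ has real part at least $s - \rho > 0$, i.e., $-A$ is Hurwitz. Finally (iii) $\Rightarrow$ (i) is essentially the definition: $-A$ Hurwitz means all eigenvalues of $A$ have strictly positive real part, the smallest being $s - \rho$, so $s > \rho$ and $A$ is a nonsingular M-matrix.

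The main obstacle is the careful use of irreducibility at two points: first, to ensure the Perron eigenvectors $u, w$ are strictly (not merely weakly) positive, which is essential in the (ii) $\Rightarrow$ (iii) argument; second, to upgrade the entry-wise non-negativity of $A^{-1}$ in the Neumann-series argument to the strict entry-wise positivity required by (iv). Without irreducibility, the Z-matrix equivalences hold only in a weakened, non-strict form, so pinpointing exactly where irreducibility enters is the real substance of the proof; the remaining implications are essentially bookkeeping once the Perron--Frobenius splitting is in place.
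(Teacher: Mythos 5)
Your proof is correct, but there is nothing in the paper to compare it against: the paper states this lemma with a citation to Plemmons \cite{PLEMMONS1977175} and gives no proof, treating it as imported background. Your argument \textemdash{} the splitting $A = sI - B$ with $B \geq 0$ irreducible, Perron--Frobenius, and the cycle (i) $\Rightarrow$ (iv) $\Rightarrow$ (v) $\Rightarrow$ (ii) $\Rightarrow$ (iii) $\Rightarrow$ (i) \textemdash{} is essentially the standard Berman--Plemmons route, and each link checks out: the Neumann series gives $A^{-1} \geq 0$, irreducibility upgrades this to $A^{-1} > 0$ via the fact that $(I+B)^{n-1}$ is entry-wise positive, the injectivity extracted from (v) is handled correctly, and the pairing $w^\top A x = (s-\rho)\, w^\top x$ cleanly forces $s > \rho$. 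Two minor refinements. First, you should note (or cite) that the condition $s > \rho(sI - A)$ does not depend on the choice of $s$, so that your working definition of a nonsingular M-matrix is well posed. Second, your closing claim that strict positivity of $w$ is ``essential'' in (ii) $\Rightarrow$ (iii) is slightly overstated: since $Ax > 0$ is strict, any nonzero $w \geq 0$ already gives $w^\top A x > 0$, and then $(s-\rho)\,w^\top x > 0$ with $w^\top x \geq 0$ yields $s > \rho$; so that step survives without irreducibility, which is genuinely load-bearing only for the strict positivity of $A^{-1}$ in (iv). Neither point is a gap.
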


\ifCLASSOPTIONcaptionsoff
  \newpage
\fi

\end{document}